\DeclareMathOperator{\Cov}{Cov}
\newcommand{\Z}{\mathds{Z}}
\newcommand{\R}{\mathds{R}}
\newcommand{\be}{\begin{equation}}
\newcommand{\ee}{\end{equation}}
\newcommand{\lip}{\text{\rm Lip}}
\renewcommand{\P}{\mathds{P}}
\newcommand{\E}{\mathds{E}}
\newcommand{\F}{\mathcal{F}}
\newcommand{\1}{\mathds{1}}
\renewcommand{\d}{{\rm d}}
\newcommand{\e}{{\rm e}}
\renewcommand{\geq}{\geqslant}
\renewcommand{\leq}{\leqslant}
\renewcommand{\ge}{\geqslant}
\renewcommand{\le}{\leqslant}
\author{Le Chen\\University of Kansas
	\and Davar Khoshnevisan\\University of Utah
	\and Kunwoo Kim\\MSRI-Berkeley}
	\title{{ A Boundedness Trichotomy\\for 
	the Stochastic Heat Equation}\thanks{
	Research supported in part by grants from
	the Swiss Federal Fellowship Program (P2ELP2\_151796, L.C.)
	and the United States' National Science Foundation  (DMS-1307470, D.K.;
	0932078000, K.K. through The
	Mathematical Sciences Research Institute at UC Berkeley)}}
\date{
	October 12, 2015}
\newtheorem{stat}{Statement}[section]
\newtheorem{theorem}[stat]{Theorem}
\newtheorem{lemma}[stat]{Lemma}
\theoremstyle{definition}
\numberwithin{equation}{section}
\begin{document}
\maketitle
\begin{abstract}
	We consider the stochastic heat equation with a multiplicative
	white noise forcing term under standard ``intermitency
	conditions.'' The main finding of this paper is that, under mild
	regularity hypotheses, the 
	 a.s.-boundedness of the solution
	$x\mapsto u(t\,,x)$ can be characterized generically by the decay
	rate,  at $\pm\infty$,
	of the initial function $u_0$. More specifically,
	we prove that there are 3 generic boundedness regimes, depending
	on the numerical value of
	$\bm\Lambda := \lim_{|x|\to\infty} |\log u_0(x)|/(\log|x|)^{2/3}$.\\

	\noindent{\it Keywords.} The stochastic heat equation.\\

	\noindent{\it \noindent AMS 2010 subject classification.}
	Primary. 60H15; Secondary. 35R60.
\end{abstract}

\section{Introduction}
It has been recently shown \cite{CJK} that a large family 
of parabolic stochastic PDEs are chaotic in the sense that small changes
in their initial value can lead to drastic changes in the global
structure of the solution. In this paper we describe 
some of the quantitative aspects of the nature of
that chaos. 

Consider the solution $u=\{u(t\,,x)\}_{t>0,x\in\R}$ of  the stochastic initial-value problem
\begin{equation}\label{SHE}\left[\begin{split}
	&\dot{u}(t\,,x)= \frac12 u''(t\,,x) +\sigma(u(t\,,x))\,\xi(t\,,x)
		&[t>0,\, x\in\R],\\
	&\text{subject to }u(0\,,x)= u_0(x)&[x\in\R],
\end{split}\right.\end{equation} 
where $\xi$ denotes space-time white noise; that is, a centered Gaussian
random field with covariance functional 
\[
	\Cov[\xi(t\,,x)\,,\xi(s\,,y)]=\delta_0(s-t)
	\delta_0(x-y)
	\qquad[s,t\ge0,\, x,y\in\R].
\]
Alternatively, we can construct $\xi$ as
$\xi(t\,,x) = \partial^2_{x,t} B(t\,,x),$
in the sense of distributions, where $B:=\{B(t\,,x)\}$ is a mean-zero
continuous Gaussian process with covariance
\[
	\text{\rm Cov}[ B(t\,,x) \,, B(s\,,y) ] = \min(s\,,t) \times\min(|x|\,,|y|)
	\times \1_{(0,\infty)}(xy),
\]
for all $s,t\ge0$ and $x,y\in\R$
[$B$ is known as a space-time Brownian sheet.]

Some of the commonly-used assumptions on the initial value $u_0$ 
and the nonlinearity $\sigma$
are that:
\begin{enumerate}
	\item[(a)] $u_0\in L^\infty(\R)$ is non random; $u_0(x)\ge 0$ for almost all 
		$x\in\R$; and $u_0>0$ on a set of positive Lebesgue measure;
		and 
	\item[(b)] $\sigma:\R\to\R$ is Lipschitz continuous and nonrandom.
\end{enumerate}
These conditions will be in place from now on.
Under these conditions, it is well known \cite{Dalang,cbms,Walsh}
that \eqref{SHE} admits a continuous predictable solution
$u$ that is uniquely defined via the \emph{a priori} condition,
\[
	\sup_{t\in[0,T]}\sup_{x\in\R}\E\left(|u(t\,,x)|^k\right)<\infty
	\quad\text{for all $T>0$ and $k\ge 2$}.
\]

Throughout, we will suppose, in addition, that the nonlinearity $\sigma$ satisfies
\begin{equation}\label{sig}
	\sigma(0)=0\quad
	\text{ and }\quad
	{\rm L}_\sigma:=\inf_{w\in\R}|\sigma(w)/w|>0.
\end{equation}
The first condition in \eqref{sig} implies that there exists a $\P$-null set off which
\[
	u(t\,,x)>0\qquad\text{for all $t>0$ and
	$x\in\R$;}
\]
see Mueller \cite{Mueller1,Mueller2}. And the  condition on
the positivity of ${\rm L}_\sigma$ is an ``intermittency condition,''
and implies among other things that 
the moments of $u(t\,,x)$ grow exponentially with time \cite{FK:09}.
The intermittency condition arose earlier in the work of Shiga \cite{Shiga}
on interacting infinite systems of It\^o-type diffusion processes.

Together, the two conditions in \eqref{sig} suffice to ensure that the solution $u$ to
\eqref{SHE} is ``chaotic'' in the sense that its global behavior, at all times, depends 
strongly on its initial state $u_0$. To be more concrete, we know for example
that if $\inf_{x\in\R} u_0(x)\ge \varepsilon$ for a constant $\varepsilon>0$,  
then \eqref{sig} implies that
\[
	\P\left\{ \sup_{x\in\R} u(t\,,x) =\infty\right\}=1\qquad\text{for
	all $t>0$};
\]
see \cite{CJK}. And by contrast, $\sup_{x\in\R}u(t\,,x)<\infty$ a.s.\
for all $t>0$ when $u_0$ is Lipschitz continuous [say] with compact support; see
\cite{FK}.
Based on these results, 
one can imagine that if and when $u_0(x)\to 0$ as $|x|\to\infty$, then 
$\sup_{x\in\R} u(t\,,x)$ can be finite or infinite for some or even all $t$, 
depending on the nature of the decay of $u_0$ at $\pm\infty$. 
The goal of this article is to describe precisely the amount of decay $u_0$ needs 
in order to ensure that $u(t\,,\cdot)$ is a bounded function almost surely. 
Because we are interested in almost-sure finiteness of the global maximum
of the solution, this undertaking
is different in style, as well as in methodology, from results that describe 
stochastic PDEs for which the spatial maximum of the solution is
in $L^k(\P)$ for some $1\le k<\infty$ \cite{FK,KonstantinosGerencer}.

We will make additional simplifying assumptions on the function $u_0$
in order to make our derivations as non-technical as possible, yet
good enough to describe the new phenomenon that we plan to present.
In view of this, we will assume throughout that 
\[
	\lim_{z\to\infty} u_0(z)=0,\quad
	u_0(x)=u_0(-x),\quad\text{and}\quad
	u_0(x) \ge u_0(y)\quad\text{if $0\le x\le y$}.
\]
Finally, we assume that the following limit exists:
\[
	\bm{\Lambda} := \lim_{|x|\to\infty} \frac{|\log u_0(x)|}{(\log |x|)^{2/3}}.
\]
The existence of  this limit is a mild condition,
since $\bm\Lambda$ can be any number in the 
\emph{closed} interval $[0\,,\infty]$.
 
 Throughout, define 
 \[
 	M(t) := \sup_{x\in\R} u(t\,,x)\hskip1in[t>0].
 \]
The following trichotomy is the main finding of this paper.

\begin{theorem}\label{th:0-1}
	Under the preceding conditions:
	\begin{enumerate}
		\item If $\bm{\Lambda}=\infty$, then 
			$\P \{ M(t)<\infty\text{ for all $t>0$} \}=1;$
		\item If $\bm{\Lambda}=0$, then
			$\P \{ M(t)=\infty\text{ for all $t>0$} \}=1;$%
			\footnote{%
				Of course, $t=0$ is different from $t>0$
				since $M(0)<\infty$ in all cases.
			}
		\item If $0<\bm{\Lambda}<\infty$,
			then there exists a random variable $\mathcal{T}$ and two nonrandom
			constants $t_1,t_2\in(0\,,\infty)$ such that: (i) $t_1< \mathcal{T}< t_2$ a.s.;
			and (ii)
			\[
				\P\left\{  M(t)<\infty \,\,\forall t<\mathcal{T}
				\quad\text{and}\quad M(t)=\infty\,\,\forall
				t>\mathcal{T} \right\}=1.
			\]
	\end{enumerate}
\end{theorem}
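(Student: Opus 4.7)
The plan is to reduce the entire trichotomy to sharp two-sided pointwise moment estimates of the form
\[
c_-(k)\, u_0(x)\exp(c_- k^2 t) \;\le\; \E\bigl[u(t,x)^k\bigr]^{1/k} \;\le\; c_+(k)\, u_0(x)\exp(c_+ k^2 t),
\]
valid for $k$ large and $|x|$ large, with $c_\pm(k)=O(1)$. The upper bound comes from BDG applied to the Walsh mild formulation combined with Picard/Gronwall iteration, using that under the symmetry and monotonicity hypotheses on $u_0$ one has $(p_t*u_0)(x)\asymp u_0(x)$ whenever $\bm{\Lambda}<\infty$ and $|x|\gg\sqrt t$ (since $u_0$ then decays slower than any Gaussian). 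The lower bound uses the intermittency hypothesis \eqref{sig}, either by comparison with the parabolic Anderson model $\sigma(u)=\mathrm{L}_\sigma u$ (where Feynman--Kac formulas give it directly), or via the moment-comparison machinery of \cite{CJK,FK}.

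Given those estimates, the upper half of the trichotomy is a Chebyshev plus Borel--Cantelli argument. For integer $x$, $\P\{u(t,x)>1\}\le C^k u_0(x)^k \exp(c_+ k^3 t)$; optimizing in $k$ yields $\log\P\{u(t,x)>1\}\le -C_1|\log u_0(x)|^{3/2}/\sqrt t$, and the hypothesis $|\log u_0(x)|\sim\bm{\Lambda}(\log|x|)^{2/3}$ converts this into the polynomial bound $|x|^{-C_1\bm{\Lambda}^{3/2}/\sqrt t}$, which is summable over $x\in\Z$ precisely when $t<C_1^2\bm{\Lambda}^3=:t_1$. A spatial chaining based on the Kolmogorov criterion (driven by the same moment bounds applied to spatial increments) upgrades the pointwise bound to a bound on $\sup_{|x|\in[n,n+1]}u(t,x)$, and Borel--Cantelli concludes that $M(t)<\infty$ a.s.\ for every $t<t_1$. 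This gives case (1) (where $t_1=\infty$) and the bounded half of case (3).

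For the matching a.s.\ unboundedness, I would evaluate $u$ along a sparse sequence $x_n=2^n$ and use that the Walsh integrals defining $u(t,x_n)$ and $u(t,x_m)$ have essentially disjoint ``cones of influence'' when $|x_n-x_m|\gg\sqrt t$, so that these variables are asymptotically independent. Paley--Zygmund applied to the lower moment bound yields $\P\{u(t,x_n)>1\}\gtrsim x_n^{-C_2\bm{\Lambda}^{3/2}/\sqrt t}$, which is \emph{non}summable as soon as $t>C_2^2\bm{\Lambda}^3=:t_2$. A Kochen--Stone quantitative second Borel--Cantelli lemma combined with the decorrelation estimate then produces infinitely many $n$ with $u(t,x_n)>1$, so $M(t)=\infty$ a.s.\ whenever $t>t_2$. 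This handles case (2) (where $t_2=0$) and the unbounded half of case (3).

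Finally, for the critical-time structure in case (3), set $\mathcal{T}:=\inf\{t>0:M(t)=\infty\}$; the previous two paragraphs then give $t_1\le\mathcal{T}\le t_2$ a.s., and I expect the strict inequalities to follow from modest quantitative strengthenings of those estimates. The remaining propagation claim, that $M(t)=\infty$ for every $t>\mathcal{T}$, I would obtain by restarting: on $\{M(s)=\infty\}$ the Markov property lets one regard $u(s+\cdot\,,\cdot)$ as a solution of \eqref{SHE} from the unbounded, pointwise-positive datum $u(s,\cdot)$ (positivity by \cite{Mueller1,Mueller2}), and then the result of \cite{CJK} forces $M(t)=\infty$ at all subsequent $t$. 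The main technical obstacle I anticipate is the decorrelation input driving the Paley--Zygmund argument in the third paragraph: quantifying the approximate independence of $u(t,x_n)$ and $u(t,x_m)$ sharply enough to validate a second Borel--Cantelli lemma with the correct exponent $3/2$ requires a careful splitting of the Walsh integral, and is, in my view, the delicate part of the proof.
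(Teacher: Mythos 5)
Your overall strategy matches the paper's: convert the decay of $u_0$ into a polynomial tail bound $\P\{u(t,x)>\varepsilon\}\approx|x|^{-c\bm\Lambda^{3/2}/\sqrt t}$, sum over a spatial grid, and run Borel--Cantelli in both directions with temporal and spatial chaining, then identify a critical time $\mathcal{T}$. But the route you take differs in three substantive ways, and two of those contain gaps.

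First, your opening step is to establish a \emph{pointwise} two-sided moment bound $\E[u(t,x)^k]^{1/k}\asymp u_0(x)\exp(ck^2t)$, motivated by the observation $(p_t*u_0)(x)\asymp u_0(x)$ when $u_0$ decays sub-Gaussianly. That heuristic is reasonable, but proving such a bound \emph{directly} is nontrivial because the Gronwall/Picard iteration for moments mixes spatial points; you would need a quantitative ``the moment at $x$ is driven primarily by $u_0$ near $x$'' estimate. The paper sidesteps this by proving an \emph{insensitivity} theorem (Theorem~\ref{th:suscept}): solutions with the same data on $[a-r,a+r]$ differ, in $L^2(\P)$-norm on the middle half of that interval, by $O(\exp(-r^2/16t))$. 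Tail bounds are then read off by comparing $u$ to the solution started from the \emph{constant} initial function $u_0(3a/2)$ (resp.\ $u_0(a/2)$), for which sharp two-sided moment bounds $\E(|z_t|^k)\asymp u_0(\cdot)^ke^{Ak^3t}$ are already in the literature. This is cleaner because the hard part of the moment estimate is done once-and-for-all in the constant-data case, and the nonconstant-data correction is a separate, explicit Gaussian error.

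Second, and most importantly, the a.s.-unboundedness half. You propose approximate independence of $u(t,x_n)$ along $x_n=2^n$ plus a Kochen--Stone quantitative second Borel--Cantelli lemma, and you correctly flag this decorrelation estimate as the delicate point. The paper does something stronger and cleaner: it replaces $u(t,x_j)$ by the localized Picard truncation $u^{(n,n)}_t(x_j)$ (integration restricted to the light cone $[x-\sqrt{nt},x+\sqrt{nt}]$), which by Lemma~\ref{lem:CJK:2} is \emph{exactly} independent across points at mutual distance $\ge 2n^{3/2}\sqrt t$, while Lemma~\ref{lem:CJK:1} gives $\E|u_t-u^{(n,n)}_t|^2\lesssim n^{-2}e^{At-n}$. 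Exact independence after truncation with a super-exponentially small error eliminates the need for any correlation-decay estimate or Kochen--Stone machinery: the vanilla second Borel--Cantelli applies after taking a union bound over the truncation error. If you insist on your route, you would have to actually prove the required covariance bound with the exponent $3/2$ intact, which is precisely the nontrivial work you defer; the localization device is the established way to get there.

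Third, the critical-time structure. You propose $\mathcal{T}:=\inf\{t:M(t)=\infty\}$ and a Markov-restarting argument in which, on $\{M(s)=\infty\}$, positivity of $u(s,\cdot)$ combined with the result of \cite{CJK} forces $M(t)=\infty$ for $t>s$. This step has a gap: the cited result of \cite{CJK} requires $\inf_x u_0(x)\ge\varepsilon>0$, not merely $u_0>0$ pointwise and $\sup_x u_0(x)=\infty$. An unbounded, everywhere-positive, but rapidly-decaying-at-infinity datum is not covered by that theorem, so the stated conclusion does not follow from it as written; you would need an additional argument (say, a localization-plus-Borel--Cantelli argument restarted from the unbounded datum). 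The paper instead sets $\mathcal{T}_N:=\inf\{t:M(t)\ge N\}$, takes $\mathcal{T}:=\lim_N\mathcal{T}_N$, and deduces the dichotomy below/above $\mathcal{T}$ directly from the monotone structure of the $\mathcal{T}_N$'s together with Parts 1 and 2 (which already establish $t_1<\mathcal{T}<t_2$). In short: your bounded half is essentially the paper's; your unbounded half would work but requires proving the decorrelation estimate you (rightly) flagged as delicate, and the paper has a sharper tool for it; and your treatment of $\mathcal{T}$ relies on an application of \cite{CJK} that does not apply under the stated hypotheses.
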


From now on we find it more convenient to write the solution to
\eqref{SHE}, using more standard probability notation, as
\[
	u_t(x) := u(t\,,x)\qquad\text{for all $t>0$ and $x\in\R$.}
\]
In particular, $u_t$ does not refer to the time derivative of $u$.

We also denote the Lipschitz constant of $\sigma$ by
\[
	\lip_\sigma:=\sup_{-\infty<x\neq y<\infty}
	\left| \frac{\sigma(x)-\sigma(y)}{x-y}\right|.
\]

\section{Tail Probabilities via Insensitivity Analysis}

One of the first problems that we need to address is  itself
related  to matters
of chaos, and more specifically to the problem of how sensitive
the solution of \eqref{SHE} is to ``small'' changes in the initial function.
A suitable solution to this sensitivity problem has a number of
interesting consequences. In the present context, we
will use sensitivity analysis to
derive sharp  estimates for the tail of the distribution of 
the solution $u_t(x)$ to \eqref{SHE}. 

We will have to interpret our sensitivity problem in a rather specific way,
which we would like to describe in terms of an adversarial game
between a player [Player 1] and Mother Nature [Player 2]. 

In this game, both players know the values of the external
noise $\xi$. Player 2 knows also the initial function $u_0$, and hence
the solution $u_t(x)$ at all space-time points $(t\,,x)$. Player 1, on the other hand
knows the values of $u_0(x)$ only for $x$ in some pre-determined interval
$[a-r\,,a+r]$. Player 1 guesses that the initial function is $v_0$,
in some fashion or another,
where $v_0(x) = u_0(x)$ for all $x\in[a-r\,,a+r]$. 

Let $v_t(x)$ denote the solution
to \eqref{SHE} with initial values $v_0$; the function $v$ is Player 1's guess
for the solution to \eqref{SHE}. The following shows that if $t\ll r$, then
near the middle portion of the spatial interval
$[a-r\,,a+r]$, the solution appears essentially the same to both Players 1 and  2.
This shows that, for a long time $[t\ll r]$,
the values of the solution to \eqref{SHE} in the middle portion of
$[a-r\,,a+r]$ are insensitive to basically all possible changes to the initial
value outside of $[a-r\,,a+r]$ .

\begin{theorem}\label{th:suscept}
	Choose and fix two parameters $a\in\R$ and $r>0$.
	Let $u$ and $v$  denote the solutions
	to \eqref{SHE} with respective initial values $u_0$
	and $v_0$, where $u_0,v_0\in L^\infty(\R)$ are
	nonrandom and $u_0(x)=v_0(x)$ a.e.\ on
	$[a-r\,,a+r]$. Then, for all $t>0$,
	\begin{equation}\label{eq:suscept}
		\sup_{|x-a|\le r/4}
		\E\left(|u_t(x) - v_t(x)|^2\right) \le C\ell
		\|u_0-v_0\|_{L^\infty(\R)}^2 \exp\left( -\frac{r^2}{16t} +
		\frac{\lip_\sigma^4 t}{4}\right),
	\end{equation}
	where $\ell:=1+\lip_\sigma^4$ and $C := 96[1\vee\lip_\sigma^{-4}].$
\end{theorem}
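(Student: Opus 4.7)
The plan is to combine the mild formulation of \eqref{SHE} with a Picard iteration that keeps careful track of the Gaussian tails arising from the fact that $u_0-v_0$ vanishes on $[a-r,a+r]$. Setting $D_t(x):=u_t(x)-v_t(x)$ and letting $p_t$ denote the Gaussian heat kernel, subtracting the two mild forms gives $D_t(x)=(p_t*(u_0-v_0))(x)+M_t(x)$, where $M_t(x):=\int_0^t\int_\R p_{t-s}(x-y)[\sigma(u_s(y))-\sigma(v_s(y))]\,\xi(\d s\,\d y)$ is a mean-zero Walsh stochastic integral, hence $L^2$-orthogonal to the deterministic term. Walsh's isometry combined with the Lipschitz continuity of $\sigma$ then yields
\[
 \psi(t,x)\le \bigl[\bigl(p_t*(u_0-v_0)\bigr)(x)\bigr]^2+\lip_\sigma^2\int_0^t\!\d s\int_\R\!\d y\;p_{t-s}(x-y)^2\,\psi(s,y),
\]
where $\psi(t,x):=\E(D_t(x)^2)$. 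Standard $L^2$ moment bounds on \eqref{SHE} make $\psi$ finite, so this inequality can be unrolled.

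Picard iteration writes $\psi\le\sum_{n\ge 0}P_n$ with $P_0:=[p_t*(u_0-v_0)]^2$ and $P_{n+1}:=\lip_\sigma^2\int\!\int p^2\,P_n$. Jensen's inequality for the probability measure $p_t(x-\cdot)\,\d y$ upgrades the base case to $P_0(t,x)\le (p_t*(u_0-v_0)^2)(x)$. The identity $p_{t-s}(x-y)^2=(4\pi(t-s))^{-1/2}p_{(t-s)/2}(x-y)$ combined with the semigroup property $p_a*p_b=p_{a+b}$ collapses the iterated spatial convolutions. By induction on $n$,
\[
 P_n(t,x)\le \lip_\sigma^{2n}\int_{0<s_n<\cdots<s_1<t}\prod_{i=1}^n\frac{\d s_i}{\sqrt{4\pi(s_{i-1}-s_i)}}\;\bigl(p_{(t+s_n)/2}*(u_0-v_0)^2\bigr)(x),
\]
with $s_0:=t$: the cumulative effective diffusion time from all intermediate Gaussian kernels is precisely $(t+s_n)/2\le t$, and the initial datum appears only once, convolved with a single heat kernel.

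The hypothesis $u_0\equiv v_0$ on $[a-r,a+r]$ now enters. Since $(u_0-v_0)^2$ is supported in $\{|y-a|>r\}$, a Gaussian tail estimate gives, for $|x-a|\le r/4$ and uniformly in $s_n\in[0,t]$,
\[
 \bigl(p_{(t+s_n)/2}*(u_0-v_0)^2\bigr)(x)\le 2\|u_0-v_0\|_{L^\infty(\R)}^2\,e^{-9r^2/(16(t+s_n))}\le 2\|u_0-v_0\|_{L^\infty(\R)}^2\,e^{-9r^2/(32t)}.
\]
The remaining purely temporal Dirichlet-type integral evaluates to $2^{-n}t^{n/2}/\Gamma(n/2+1)$, so summing in $n$ produces the Mittag--Leffler series $\sum_n(\lip_\sigma^2\sqrt{t}/2)^n/\Gamma(n/2+1)=E_{1/2}(\lip_\sigma^2\sqrt{t}/2)$, which by the classical identity $E_{1/2}(z)=e^{z^2}(1+\mathrm{erf}(z))\le 2e^{z^2}$ (for $z\ge 0$) is at most $2\exp(\lip_\sigma^4 t/4)$. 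Combining gives $\psi(t,x)\le 4\|u_0-v_0\|_{L^\infty(\R)}^2\exp(-9r^2/(32t)+\lip_\sigma^4 t/4)$, and the claimed bound follows by absorbing the stronger spatial exponent $9/(32t)$ into $1/(16t)$ and collecting the $\lip_\sigma$-dependent constants into $C\ell$.

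The main technical step is the inductive collapse of the iterated spatial convolutions. The naive bound $\int p_{t-s}^2\,\d y=(4\pi(t-s))^{-1/2}$ discards all spatial information and yields an estimate that fails to decay in $r$. Pushing $(u_0-v_0)^2$ through the full chain of intermediate Gaussian kernels via the identity $p^2\propto p_{\cdot/2}$ and the semigroup property is what preserves the Gaussian decay in $r$; once that is in place, everything else is routine Gaussian tail estimation and summation of a Mittag--Leffler series.
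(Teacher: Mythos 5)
Your proof is correct, and it takes a genuinely different (and arguably cleaner) path than the paper's. The paper also starts from the renewal inequality for $\psi(t,x)=\E(|u_t(x)-v_t(x)|^2)$, but then appeals to Chen--Dalang's Theorem 2.4 (Lemma~\ref{lem:CD} here) to produce a closed-form majorant of the form $\mathcal J^2+\mathcal J^2\odot\mathcal K^{(\lip_\sigma)}$ with an explicit kernel $\mathcal K^{(\alpha)}$; it then bounds that kernel pointwise by $\ell p_{t/2}(x)[t^{-1/2}+\e^{\lip_\sigma^4 t/4}]$, applies Lemma~\ref{lem:suscept} to $\mathcal J_{t-s}$ at each time slice, and handles the spreading caused by the extra convolution against $p_{s/2}$ by splitting the spatial integral into $|y-a|\le r/2$ and $|y-a|>r/2$. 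You instead unroll the renewal inequality directly into a Picard series $\psi\le\sum_n P_n$, use Jensen once at the base of the iteration to replace $[\mathcal J_t]^2$ by $p_t*(u_0-v_0)^2$, and then observe that the identity $p_\tau^2=(4\pi\tau)^{-1/2}p_{\tau/2}$ together with the semigroup law collapses all the intermediate spatial convolutions so that $(u_0-v_0)^2$ is always hit by a \emph{single} Gaussian of effective time at most $t$. This keeps the Gaussian decay in $r$ intact through every term of the series, eliminating both the split-integral step and the reference to \cite{ChenDalang}, and it separates the temporal integral into the standard Dirichlet-simplex form whose sum is the Mittag--Leffler function $E_{1/2}$. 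The bookkeeping (the induction step, $f_n(t)=2^{-n}t^{n/2}/\Gamma(n/2+1)$ via the Beta identity, and $E_{1/2}(z)\le 2\e^{z^2}$ for $z\ge0$) all checks out, and your route even yields a sharper spatial exponent $9/32$ in place of $1/16$ and a constant $4$ in place of $C\ell$; the stated inequality follows a fortiori.
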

It might help to emphasize that $t\ll r$ if and only if the exponential on 
the right-hand side of \eqref{eq:suscept} is a  small quantity.

The proof of Theorem \ref{th:suscept} relies on two technical lemmas.
The first lemma is an elementary fact about the linear 1-dimensional
heat equation [and associated convolution equations].

\begin{lemma}\label{lem:suscept}
	Suppose $h\in L^\infty(\R)$ is a nonrandom function
	that is equal to zero a.e.\ in an interval $[a-r\,,a+r]$. Then,
	for all $t>0$,
	\[
		\sup_{x:|x-a|\le r/2} \left| (p_t*h)(x)  \right| \le 2\|h\|_{L^\infty(\R)}\cdot
		\e^{-r^2/(8t)}.
	\]
\end{lemma}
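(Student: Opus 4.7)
The plan is to exploit the fact that $h$ vanishes on $[a-r, a+r]$, so the convolution $(p_t * h)(x)$ only picks up contributions from $y$ with $|y-a| > r$. When $|x - a| \le r/2$, a triangle-inequality argument gives $|x - y| \ge |y - a| - |x - a| > r/2$, which pushes the heat kernel into its Gaussian tail.

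Concretely, I would start by writing
\[
    (p_t * h)(x) = \int_{|y-a|>r} p_t(x-y)\, h(y)\,\d y,
\]
and then bounding the absolute value, uniformly in $x$ with $|x - a| \le r/2$, by
\[
    |(p_t*h)(x)| \le \|h\|_{L^\infty(\R)} \int_{|z|>r/2} p_t(z)\,\d z = 2\|h\|_{L^\infty(\R)} \int_{r/2}^{\infty} \frac{1}{\sqrt{2\pi t}}\, \e^{-z^2/(2t)}\,\d z,
\]
after the change of variables $z := x - y$ and using the symmetry of the heat kernel.

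The last step is then a standard Gaussian tail estimate. Substituting $w := z/\sqrt{t}$ reduces the integral to $\P\{Z > r/(2\sqrt{t})\}$ for a standard normal $Z$, and the elementary Chernoff-style bound $\P\{Z > c\} \le \tfrac12 \e^{-c^2/2}$ (valid for all $c \ge 0$) yields an upper bound of $\tfrac12 \e^{-r^2/(8t)}$. Combined with the factor of $2\|h\|_{L^\infty(\R)}$ in front, this gives the claimed estimate. There is really no substantive obstacle: the argument is just a careful bookkeeping of the distance from $x$ to the support of $h$ together with a Gaussian tail bound, and the only care to take is to be sure the exponent comes out to exactly $r^2/(8t)$ rather than something larger.
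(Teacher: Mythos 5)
Your proof is correct and follows essentially the same approach as the paper: restrict the convolution to the tail region $|y-a|>r$, observe via the triangle inequality that $|x-y|>r/2$ whenever $|x-a|\le r/2$, and finish with a Gaussian tail estimate. Incidentally, your sharper tail bound $\P\{Z>c\}\le\tfrac12 \e^{-c^2/2}$ actually yields $\|h\|_{L^\infty(\R)}\,\e^{-r^2/(8t)}$, i.e.\ without the factor of $2$, which is slightly better than the stated inequality.
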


\begin{proof}
	By Minkowski's inequality,
	\[
		|(p_t*h)(x) |\le\|h\|_{L^\infty(\R)}\cdot\int_{|w+x-a|>r} p_t(w)\,\d w.
	\]
	If $|w+x-a|>r$ and $|x-a|\le r/2$, then certainly $|w|>r/2$. 
	The lemma follows from the simple bound,
	$\int_{|w|>r/2} p_t(w)\,\d w\le 2\exp\{ -
	r^2/(8t)\}$.
\end{proof}

In order to introduce the second lemma we first need some notation.
Let ``$\odot$'' denote space-time convolution. That is,
for all measurable space-time functions $f$ and $g$,
\[
	(f\odot g)_t(x) := \int_0^t\d s\int_{-\infty}^\infty \d y\
	f_{t-s}(x-y) g_s(y), 
\]
pointwise, whenever the [Lebesgue] integral is absolutely convergent.
For every $\alpha>0$, consider the space-time kernel 
$\mathcal{K}^{(\alpha)}$, defined as
\begin{equation}\label{K}
	\mathcal{K}_t^{(\alpha)}(x) := \frac{\alpha^2}{2}
	p_{t/2}(x)\left[\frac{1}{\sqrt{\pi t}} + 
	\alpha^2 \exp\left(\frac{\alpha^4 t}{4}\right)
	\Phi\left(\alpha^2\sqrt{\frac t2}\right)\right],
\end{equation}
for all $t>0$ and $x\in\R$, where
$\Phi(x) := (2\pi)^{-1/2} \int_{-\infty}^x\exp(-y^2/2)\,\d y$
$[x\in\R]$ denotes the cumulative distribution function of a standard normal law on 
the line. We can now state our second technical lemma.

\begin{lemma}\label{lem:CD}
	Choose and fix a deterministic function $f\in L^\infty(\R)$,
	and define a space-time function $\mathcal{J}$ via
	$\mathcal{J}_t(x):=(p_t*f)(x)$ for all $t>0$ and $x\in\R$. 
	Suppose $(t\,,x)\mapsto F_t(x)$ is a measurable
	space-time function that is bounded in $x$ and grows at most
	exponentially in $t$, and satisfies
	\begin{equation}\label{RE}
		F \le \mathcal{J}^2  + \alpha^2 (F\odot p^2),
	\end{equation}
	pointwise for a fixed constant $\alpha>0$. Then,
	\begin{equation}\label{FF}
		F \le \mathcal{J}^2 + \left( \mathcal{J}^2\odot \mathcal{K}^{(\alpha)}\right)
		\qquad\text{\rm pointwise}.
	\end{equation}
\end{lemma}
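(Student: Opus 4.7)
The plan is to iterate \eqref{RE} and identify the resulting kernel series with $\mathcal{K}^{(\alpha)}$. Substituting the hypothesis $F \le \mathcal{J}^2 + \alpha^2(F \odot p^2)$ into itself $n$ times, and using associativity of $\odot$, gives
\[
	F \le \mathcal{J}^2 + \sum_{k=1}^{n}\alpha^{2k}\bigl(\mathcal{J}^2 \odot (p^2)^{\odot k}\bigr) + \alpha^{2(n+1)}\bigl(F\odot (p^2)^{\odot(n+1)}\bigr),
\]
where $(p^2)^{\odot k}$ denotes the $k$-fold space-time convolution of $p^2$ with itself. Thus \eqref{FF} follows, with $\mathcal{K}^{(\alpha)} = \sum_{k\ge 1}\alpha^{2k}(p^2)^{\odot k}$, once I verify that (i) the remainder term vanishes pointwise as $n\to\infty$; and (ii) this series agrees with the closed form \eqref{K}.

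The key to part (ii) is the elementary identity $p_t(x)^2 = (2\sqrt{\pi t})^{-1}\,p_{t/2}(x)$; combined with the heat semigroup property $p_a * p_b = p_{a+b}$ and the Beta integral $\int_0^t [s(t-s)]^{-1/2}\d s = \pi$, an induction on $n$ shows that $(p^2)^{\odot n}_t(x) = c_n(t)\,p_{t/2}(x)$, where $c_1(t) = (2\sqrt{\pi t})^{-1}$ and
\[
	c_{n+1}(t) = \frac{1}{2\sqrt{\pi}}\int_0^t \frac{c_n(s)}{\sqrt{t-s}}\,\d s.
\]
Consequently, $S(t) := \sum_{n\ge 1}\alpha^{2n}c_n(t)$ satisfies the linear Abel integral equation
\[
	S(t) = \frac{\alpha^2}{2\sqrt{\pi t}} + \frac{\alpha^2}{2\sqrt{\pi}}\int_0^t \frac{S(s)}{\sqrt{t-s}}\,\d s.
\]
Taking Laplace transforms yields $\widehat{S}(\lambda) = \alpha^2/(2\sqrt{\lambda}-\alpha^2)$. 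Writing this as $c/\sqrt{\lambda} + c^2/[\sqrt{\lambda}(\sqrt{\lambda}-c)]$ with $c:=\alpha^2/2$, and using the standard inversion $\mathcal{L}^{-1}[(\sqrt{\lambda}(\sqrt{\lambda}-c))^{-1}](t) = \e^{c^2 t}(1+\mathrm{erf}(c\sqrt{t}))$, together with the elementary relation $\tfrac12(1+\mathrm{erf}(c\sqrt{t})) = \Phi(c\sqrt{2t}) = \Phi(\alpha^2\sqrt{t/2})$, recovers exactly the bracketed coefficient of $p_{t/2}(x)$ appearing in \eqref{K}.

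For (i), write $|F_s(y)|\le A\e^{Bs}$ using the spatial boundedness and at-most-exponential temporal growth of $F$; then the remainder at $(t,x)$ is dominated by $A\e^{Bt}\alpha^{2(n+1)}\int_0^t c_{n+1}(s)\,\d s$. Iterating the recursion twice shows $c_{n+2}(t) = \tfrac14\int_0^t c_n(s)\,\d s$, which yields factorial decay of $\alpha^{2n}c_n(t)$ and hence vanishing of the remainder for each fixed $t$. I expect the main technical hurdle to be part (ii) -- namely matching the rather specific closed form \eqref{K} to $S(t)$, which proceeds most cleanly through the Laplace-transform route described above, or equivalently via the direct verification that the candidate \eqref{K}$/p_{t/2}$ satisfies the Abel equation (uniqueness for Abel equations then forces equality).
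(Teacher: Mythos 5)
Your proof is correct and complete, with one contextual note: the paper itself does not prove Lemma~\ref{lem:CD} but rather defers entirely to Chen and Dalang's Theorem~2.4, observing only that $\mathcal{K}^{(\lip_\sigma)}$ coincides with their kernel $\overline{\mathcal{K}}$ and that the argument carries over to general $F$ and $\alpha$. Your iteration of the renewal-type inequality \eqref{RE} and the identification $\mathcal{K}^{(\alpha)} = \sum_{k\ge 1}\alpha^{2k}(p^2)^{\odot k}$ is precisely the mechanism behind Chen--Dalang's result, so you have supplied the details the paper leaves to the reference. Every ingredient checks out: $p_t^2 = (2\sqrt{\pi t})^{-1}p_{t/2}$, the semigroup property giving $(p^2)^{\odot n}_t(x) = c_n(t)p_{t/2}(x)$ with the stated Abel recursion, the Laplace-transform solution $\widehat S(\lambda) = \alpha^2/(2\sqrt\lambda-\alpha^2)$ and its inversion matching the bracket in \eqref{K} via $\frac12(1+\mathrm{erf}(c\sqrt t)) = \Phi(\alpha^2\sqrt{t/2})$ with $c=\alpha^2/2$, the two-step reduction $c_{n+2}(t)=\tfrac14\int_0^t c_n$ giving factorial decay, and the remainder control from the exponential growth hypothesis on $F$. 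Where Chen--Dalang compute the kernel sum more directly, you reach the same closed form through the Laplace-transform/Abel-equation route; this is a clean alternative that makes the uniqueness argument transparent, but it buys nothing beyond what the cited theorem already delivers.
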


\begin{proof}[Proof (sketch)]
	This is basically the first part of eq.\ (2.21) of Chen and Dalang \cite{ChenDalang},
	but is stated here in slightly more general terms. Therefore, we skip
	the details and merely point out
	how one can relate Lemma \ref{lem:CD} to the work of
	Chen and Dalang \cite{ChenDalang}, deferring the details to the latter reference.
	
	In order to see how one can deduce
	this lemma from the arguments of Chen and Dalang, let us consider the
	stochastic heat equation \eqref{SHE} with a nonrandom initial value
	$f$, and let $U$ denote the solution. We can write the solution in integral
	form as follows:
	\[
		U_t(x) = \mathcal{J}_t(x) + \int_{(0,t)\times\R} p_{t-s}(y-x)
		\sigma(U_s(y))\,\xi(\d s\,\d y).
	\]
	Elementary properties of the stochastic integral imply that
	\begin{align*}
		\E\left(|U_t(x)|^2\right) &= \mathcal{J}_t^2(x) +
			\int_0^t\d s\int_{-\infty}^\infty\d y\ [p_{t-s}(y-x)]^2
			\E\left( |\sigma(U_s(y))|^2\right)\\
		&\le \mathcal{J}_t^2(x) +\lip_\sigma^2
			\int_0^t\d s\int_{-\infty}^\infty\d y\ [p_{t-s}(y-x)]^2
			\E\left( |U_s(y)|^2\right).
	\end{align*}
	That is, in the special case that $F_t(x)=\E(|U_t(x)|^2)$
	satisfies \eqref{RE} with $\alpha=\lip_\sigma$.
	In this special case, Theorem 2.4 of Chen and Dalang \cite{ChenDalang}
	implies \eqref{FF}, and our function
	$\mathcal{K}^{(\lip_\sigma)}$ coincides with their function
	$\overline{\mathcal{K}}$. For general $F$ and $\alpha$, the very
	same proof works equally well. 
\end{proof}

\begin{proof}[Proof of Theorem \ref{th:suscept}]
	We begin by writing $u$ and $v$ in integral form as follows:
	\begin{align*}
		u_t(x) &= (p_t*u_0)(x) + \int_{(0,t)\times\R} p_{t-s}(y-x)
			\sigma(u_s(y))\,\xi(\d s\,\d y),\\
		v_t(x) &= (p_t*v_0)(x) + \int_{(0,t)\times\R} p_{t-s}(y-x)
			\sigma(v_s(y))\,\xi(\d s\,\d y).
	\end{align*}
	Define
	\[
		f(x) := |u_0(x)-v_0(x)|\qquad\text{\rm for all $x\in\R$},
	\]
	and set $\mathcal{J}_t(x) := (p_t*f)(x)$ for all $t>0$ and $x\in\R$.
	Then clearly,
	\begin{align*}
		\E\left(|u_t(x)-v_t(x)|^2\right)
			&\le \left| \mathcal{J}_t(x)\right|^2
			+ \int_0^t\d s\int_{-\infty}^\infty\d y\
			[p_{t-s}(y-x)]^2\E\left(\left|\sigma(u_s(y))-\sigma(v_s(y))\right|^2\right)\\
		&\le\left| \mathcal{J}_t(x)\right|^2
			+ \lip_\sigma^2\int_0^t\d s\int_{-\infty}^\infty\d y\
			[p_{t-s}(y-x)]^2\E\left(\left|u_s(y)-v_s(y)\right|^2\right).
	\end{align*}
	In other words, the space-time function,
	\[
		F_t(x) := \E\left( |u_t(x)-v_t(x)|^2\right)
		\qquad[t>0,x\in\R],
	\]
	satisfies \eqref{RE} with $\alpha=\lip_\sigma$. Therefore,
	\eqref{FF} implies that
	\begin{equation}\label{FFF}
		F \le \mathcal{J}^2 + \left( \mathcal{J}^2\odot \mathcal{K}^{(\lip_\sigma)}\right)
		\qquad\text{\rm pointwise}.
	\end{equation}
	
	An inspection of the function $\mathcal{K}^{(\lip_\sigma)}$---see
	\eqref{K}---shows that
	\[
		\mathcal{K}^{(\lip_\sigma)}_t(x) \le \ell p_{t/2}(x)\left[ \frac{1}{\sqrt t} + 
		\exp\left(\frac{\lip_\sigma^4 t}{4}\right)\right],
	\]
	 for all $t>0$ and $x\in\R$. Consequently,
	\[
		\left(\mathcal{J}^2\odot \mathcal{K}^{(\lip_\sigma)}\right)_t(x) \le 
		\ell\int_0^t \left( \mathcal{J}^2_{t-s}*p_{s/2}\right)(x)
		\left[ \frac{1}{\sqrt s}+\exp\left(\frac{\lip_\sigma^4 s}{4}\right)\right] \d s.
	\]
	Set $B:=\|u_0-v_0\|_{L^\infty(\R)}$, and observe that
	$\|\mathcal{J}_{t-s}\|_{L^\infty(\R)}\le B$. According to Lemma \ref{lem:suscept},
	\begin{equation}\label{JF}
		\sup_{|y-a|\le r/2}\mathcal{J}_{t-s}(y) \le 2 B \exp\left(-\frac{r^2}{8(t-s)}\right)\le
		2B\e^{-r^2/(8t)}.
	\end{equation}
	Consequently, we can split up the
	ensuing integral into regions where $|y-a|\le r/2$ and
	where $|y-a|>r/2$ in order to see that
	\begin{align*}
		\left( \mathcal{J}^2_{t-s}*p_{s/2}\right)(x) 
			& \le 4 B^2\e^{-r^2/(4t)} + B^2\int_{|y+x-a|>r/2}
			p_{s/2}(y)\,\d y\\
		&\le 4 B^2\e^{-r^2/(4t)} + B^2\int_{|y|>r/4}
			p_{s/2}(y)\,\d y,
	\end{align*}
	uniformly for all $|x-a|\le r/4$ and $0<s<t$. This and a simple tail bound together yield
	\[
		\sup_{|x-a|\le r/4} \left( \mathcal{J}^2_{t-s}*p_{s/2}\right)(x) 
		\le 4 B^2\e^{-r^2/(4t)} + 2B^2\e^{-r^2/(16s)}
		\le 6B^2\e^{-r^2/(16t)},
	\]
	for all $0<s<t$.
	Thus, we can see that, uniformly for all $t>0$ and all $x$ that
	satisfy $|x-a|\le r/4$,
	\begin{align*}
		\left(\mathcal{J}^2\odot \mathcal{K}^{(\lip_\sigma)}\right)_t(x) &\le  
			6\ell B^2\e^{-r^2/(16t)}\cdot \int_0^t 
			\left[ \frac{1}{\sqrt s}+\exp\left(\frac{\lip_\sigma^4 s}{4}\right)\right] \d s\\
		&\le 6\ell B^2 \e^{-r^2/(16t)}\cdot\left[2\sqrt t + 
			\frac{4}{\lip_\sigma^4}\exp\left(\frac{\lip_\sigma^4 t}{4}\right)\right]\\
		&\le 6\ell B^2\exp\left( -\frac{r^2}{16t} + 
			\frac{\lip_\sigma^4 t}{4}\right)\cdot
			\sup_{s>0}\left[2\sqrt s\, \exp\left(-\frac{\lip_\sigma^4 s}{4}\right) + 
			\frac{4}{\lip_\sigma^4}\right]\\
		&\le 48\ell 
			\left[1\vee \frac{1}{\lip_\sigma^4}\right]
			B^2\exp\left( -\frac{r^2}{16t} + \frac{\lip_\sigma^4 t}{4}\right);
	\end{align*}
	consult also \eqref{FFF}. 
	Combine this estimate with \eqref{JF}
	and \eqref{FFF} to finish.
\end{proof}

Our two technical Lemmas \ref{lem:suscept} and \ref{lem:CD} yield the
following tail probability bounds.

\begin{theorem}\label{th:tails}
	There exist universal constants $0<K,L<\infty$ such that
	for all $\varepsilon>0$,
	\[
		- \frac{L \Lambda^{3/2}}{\sqrt t}
		\le
		\liminf_{|x|\to\infty} \frac{\log \P\left\{ u_t(x)>\varepsilon\right\}}{\log |x|}
		\le
		\limsup_{|x|\to\infty} \frac{\log \P\left\{ u_t(x)>\varepsilon\right\}}{\log |x|}
		\le
		-\frac{K \Lambda^{3/2}}{\sqrt t},
	\]
	uniformly for all $t$ in every fixed compact subset of $(0\,,\infty)$.
\end{theorem}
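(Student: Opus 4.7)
The plan is to use Theorem \ref{th:suscept} to reduce tail probabilities of $u_t(x)$ (for $|x|$ large) to those of a solution whose initial datum is localized near $x$, and then to apply Markov's (or Paley--Zygmund's) inequality at a moment order $k$ that is allowed to depend on $|x|$. The exponent $\bm\Lambda^{3/2}/\sqrt{t}$ emerges from balancing $k \asymp (\log|x|)^{1/3}$ against the stretched-exponential decay $|\log u_0(x)| \sim \bm\Lambda(\log|x|)^{2/3}$.

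\emph{Upper bound.} Fix a truncation radius $r = r(|x|)$ to be chosen, and set $v_0(y) := u_0(y)\,\1_{[x-r,x+r]}(y)$. Because $u_0$ is symmetric and radially decreasing, $\|v_0\|_{L^\infty(\R)} \le u_0(|x|-r)$, and because $v_0 = u_0$ on $[x-r,x+r]$, Theorem~\ref{th:suscept} at $a = x$ gives
\[
	\E\left(|u_t(x) - v_t(x)|^2\right) \le C\ell\,\|u_0\|_{L^\infty(\R)}^2 \exp\!\left(-\frac{r^2}{16t} + \frac{\lip_\sigma^4 t}{4}\right).
\]
Applying Lemma~\ref{lem:CD} to $H_t(y) := \|v_t(y)\|_{2k}^2$---Burkholder--Davis--Gundy shows that $H$ satisfies \eqref{RE} with $\alpha$ of order $\sqrt{k}\,\lip_\sigma$---yields the standard intermittency moment bound
\[
	\E(v_t(x)^{2k}) \le C_1^{2k}\, u_0(|x|-r)^{2k} \exp(C_2 k^3 t),
\]
for universal $C_1,C_2 > 0$. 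Bound $\P\{u_t(x) > \varepsilon\}$ by $\P\{v_t(x) > \varepsilon/2\} + \P\{|u_t(x)-v_t(x)| > \varepsilon/2\}$ and apply Markov at orders $2k$ and $2$ respectively. Choosing $r$ so that $r^2/(16t) \ge 2k\bm\Lambda(\log|x|)^{2/3}(1+o(1))$---which forces $r = O(\sqrt{\log|x|}) = o(|x|)$, so that $u_0(|x|-r) = \exp(-\bm\Lambda(\log|x|)^{2/3}(1+o(1)))$---makes the two contributions comparable, and then optimizing over $k$ (the critical scale being $k^\ast \asymp (\bm\Lambda/(3 C_2 t))^{1/2}(\log|x|)^{1/3}$) gives
\[
	\log \P\{u_t(x) > \varepsilon\} \le -\tfrac{2}{3\sqrt{3 C_2}}\,\bm\Lambda^{3/2}(\log|x|)/\sqrt{t}\cdot(1+o(1)),
\]
which is the claimed $\limsup$ bound with $K = 2/(3\sqrt{3C_2})$.

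\emph{Lower bound and main obstacle.} For the matching $\liminf$, apply the Paley--Zygmund inequality at moment order $k$: for any $X \ge 0$ and $\theta \in (0,1)$,
\[
	\P\{X > \theta\|X\|_k\} \ge (1-\theta^k)^2 \bigl(\|X\|_k / \|X\|_{2k}\bigr)^{2k}.
\]
Taking $X = u_t(x)$ with $\theta$ a fixed constant in $(0,1)$ requires two-sided moment bounds of the shape
\[
	c_1 (p_t*u_0)(x)\, e^{c_2 k^2 t} \le \|u_t(x)\|_k \le c_1'(p_t*u_0)(x)\, e^{c_2' k^2 t}.
\]
The upper half again follows from iterating Lemma~\ref{lem:CD}; the lower half invokes the intermittency hypothesis $|\sigma(u)| \ge \mathrm{L}_\sigma |u|$ and a comparison with the parabolic Anderson model to retain the factor $(p_t*u_0)(x)^k$. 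Picking $k$ of order $(\log|x|)^{1/3}$ so that $\theta\|u_t(x)\|_k > \varepsilon$ and feeding this into the Paley--Zygmund bound then delivers a lower tail of the form $|x|^{-L\bm\Lambda^{3/2}/\sqrt{t}(1+o(1))}$. The principal obstacle is exactly these matching moment estimates: the upper half is classical, but the lower half must carry the spatial profile $(p_t*u_0)(x)$ through the full moment hierarchy, which is not evident from \eqref{SHE} without exploiting the intermittency structure on $\sigma$. Once both halves are in hand, the remainder of the proof is the same calculus optimization in $(r,k)$, driven entirely by the $(\log|x|)^{2/3}$ scaling of $\log u_0$.
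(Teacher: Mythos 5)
Your upper bound is a genuinely different route from the paper's and would plausibly work, but your lower bound has a real gap that you acknowledge without closing.

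For the $\limsup$ bound, you truncate $u_0$ to a compact window $[x-r,\,x+r]$ and re-derive moment bounds for the truncated solution $v$ via Lemma~\ref{lem:CD} and a BDG iteration with $k$-dependent~$\alpha$. The paper instead replaces $u_0$ by $w_0(\cdot)=u_0(|\cdot|\wedge(a/2))$ (so $w_0\ge u_0$, hence $w_t\ge u_t$ by Mueller's comparison principle), uses the susceptibility estimate to compare $w_t$ to the solution $z_t$ with \emph{constant} initial datum $u_0(a/2)$, and then invokes the known two-sided moment bounds \eqref{z:mom} for that constant-datum solution (Theorems 5.5 and 6.4 of \cite{cbms}, supplemented by \cite{JKM}). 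Both decompositions scale the same way and yield the same $(\log|x|)^{2/3}\cdot(\log|x|)^{1/3}$ balance, but the paper's choice has the advantage that it never has to re-prove a moment hierarchy from scratch: the reduction to a constant initial datum hands over the full two-sided estimate for free, including the matching $e^{\pm c k^3 t}$ rates.

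The $\liminf$ direction is where you have a genuine gap. You apply Paley--Zygmund to $u_t(x)$ itself, which requires the two-sided bound
\[
	c_1\,(p_t*u_0)(x)\,e^{c_2 k^2 t}\;\le\;\|u_t(x)\|_k\;\le\;c_1'\,(p_t*u_0)(x)\,e^{c_2' k^2 t},
\]
with matching exponential rates. The upper half is routine; the lower half is exactly the hard part, and your stated remedy (``a comparison with the parabolic Anderson model to retain the factor $(p_t*u_0)(x)^k$'') is not a proof. There is no off-the-shelf lower moment bound that carries the spatial profile $(p_t*u_0)(x)$ as a clean multiplicative prefactor through the whole moment hierarchy with a matching $e^{ck^3 t}$ rate for a general decaying, merely Lipschitz nonlinear $\sigma$; this is precisely the point where one needs \cite{JKM} and the passage to a constant initial datum. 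The paper avoids the issue entirely: it takes $w_0(\cdot)=u_0(|\cdot|\vee(3a/2))\le u_0$, so $w_t\le u_t$, and couples $w$ to the constant-datum solution $z$ with $z_0\equiv u_0(3a/2)$ via the susceptibility estimate. Paley--Zygmund is then run on $z_t$, where \eqref{z:mom} already supplies the required two-sided moment control, and the tail lower bound transfers to $u_t$ through $\P\{w_t(x)\ge\varepsilon\}\ge\P\{z_t(x)\ge2\varepsilon\}-\P\{|w_t(x)-z_t(x)|\ge\varepsilon\}$. Your optimization calculus in $(r,k)$ is correct once the moment inputs are granted, but without the comparison-to-constant step the lower bound of your proposal is not complete.

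Two minor points: you implicitly assume $u_t(x)\stackrel{d}{=}u_t(-x)$ when passing from one-sided to two-sided limits; the paper establishes this by a symmetry argument on the Picard iterates. Also, your truncation radius $r\asymp\sqrt{\log|x|}$ works for the upper bound, but the paper's choice $r\asymp|x|$ is simpler and makes the susceptibility error super-polynomially small, so no delicate balancing between the susceptibility error and the target polynomial decay is needed.
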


We prove this theorem in two parts. In the first part we establish the claimed
lower bound on $\liminf_{|x|\to\infty}(\,\cdots)$.
The corresponding upper bound on $\limsup_{|x|\to\infty}(\,\cdots)$ is
derived afterward in a second part.

\begin{proof}[Proof of Theorem \ref{th:tails}: Part 1.]
	Let $u^{(0)}_t(x):=u_0(x)$ and define
	\[
		u^{(n+1)}_t(x) := (p_t*u_0)(x) + \int_{(0,t)\times\R} p_{t-s}(y-x)
		\sigma\left(u^{(n)}_s(y)\right)\xi(\d s\,\d y),
	\]
	for all $n\ge 0$, $t>0$, and $x\in\R$. It is well known that 
	$u^{(n)}_t(x)\to u_t(x)$ in $L^2(\P)$ as $n\to\infty$,
	for every $t>0$ and $x\in\R$; see Walsh \cite{Walsh}. Since
	$u^{(0)}$ and $p_t*u_0$ are symmetric functions, the symmetry
	of white noise [in law] shows that  $\{u^{(n+1)}_t(x)\}_{x\in\R}$ and
	$\{u^{(n+1)}_t(-x)\}_{x\in\R}$ have the same law for all $n\ge 0$. We
	can let $n\to\infty$ in order to deduce, in particular,
	that the random variables $u_t(x)$ and $u_t(-x)$ have the same distribution
	for each $t>0$ and $x\in\R$.
	
	In light of the preceding symmetry property,
	in order to derive the stated lower bound for
	$\P\{u_t(x)>\varepsilon\}$, it remains to prove that if $\bm{\Lambda}<\infty$, then
	\begin{equation}\label{plan:LB}
		\liminf_{x\to\infty} \frac{\log\P\left\{ u_t(x)>\varepsilon\right\}}{\log x}
		\ge - \frac{L\bm{\Lambda}^{3/2}}{\sqrt t}.
	\end{equation}
	[The assertion holds trivially when $\bm{\Lambda}=\infty$.]
	Let us consider now the case that $\bm\Lambda<\infty$.
	
	Choose and fix an arbitrary  number $a>0$, and define
	$w=\{w_t(x)\}_{t>0,x\in\R}$
	to be the solution to \eqref{SHE} with the following
	respective initial value:
	\[
		w_0(x) := u_0(|x|\vee(3a/2))
		\qquad\text{for all $x\in\R$}.
	\]
	The construction of the process $w$ does not present any problems
	because $w_0$ is a nonrandom elements of $L^\infty(\R)$; in fact,
	$0\le w_0 \le u_0$. These inequalities have the additional consequence
	that
	\begin{equation}\label{w<u}
		w_t(x) \le u_t(x)\qquad
		\text{for all $t>0$ and $x\in\R$},
	\end{equation}
	thanks to Mueller's comparison principle \cite{Mueller1,Mueller2}. 
	Therefore, it remains to find a
	lower  bound for the tails of the distribution of $w_t(x)$. 
	
	Define 
	\[
		z_0(x) := u_0(3a/2)\qquad\text{for all $x\in\R$},
	\]
	and let $z:=\{z_t(x)\}_{t>0,x\in\R}$ denote the solution to \eqref{SHE}
	with initial value $z_0$. By the comparison principle,
	$w_t(x) \le z_t(x)$ for all $t\ge0$ and $x\in\R$. We now use our susceptibilty
	estimate [Theorem  \ref{th:suscept}] in order to prove that there is
	a similar lower bound near the point $x=a$, 
	provided that we introduce a small error. 
	Specifically, we apply Theorem \ref{th:suscept} with $r:=a/2$ in
	order to see that
	\begin{align}\notag
		\sup_{x\in [7a/8,9a/8]}\E\left( \left| w_t(x) - z_t(x) \right|^2 \right)
			&\le C\ell\| w_0-z_0\|_{L^\infty(\R)}^2
			\exp\left( -\frac{a^2}{64t} + \frac{\lip_\sigma^4 t}{4}\right)\\
		&\le C\ell\|u_0\|_{L^\infty(\R)}^2
			\exp\left( -\frac{a^2}{64t} + \frac{\lip_\sigma^4 t}{4}\right).
			\label{w-z}
	\end{align}
	Since $z$ solves \eqref{SHE} with constant initial function $z_0(\cdot)\equiv u_0(3a/2)$,
	Theorems 5.5 [page 44] and 6.4 [page 57] of Ref.\
	\cite{cbms} tell us that there exists a finite
	universal constant $A>2$ such that
	\begin{equation}\label{z:mom}
		A^{-k}\left[u_0(3a/2)\right]^k \e^{k^3t/A}\le
		\E\left(|z_t(x)|^k\right) \le A^k \left[u_0(3a/2)\right]^k \e^{Ak^3t},
	\end{equation}
	simultaneously for all $x\in\R$, $t>0$, and $k\in[2\,,\infty)$. Actually,
	the results of \cite{cbms} imply
	the lower bound for $\E(|z_t(x)|^k)$ only in the case that
	$\sigma(z)=\text{const}\cdot z$ for all $z\in\R$. The general case follows
	from that fact and the moment comparison theorem of Joseph et al \cite{JKM}.
	
	In any case, we apply the Paley--Zygmund inequality, as in 
	Ref.\ \cite[Chapter 6]{cbms}, in order to see that
	\begin{align*}
		\P\left\{ z_t(x) \ge \tfrac12 A^{-1}u_0(3a/2) \e^{k^2t/A}\right\} &\ge 
			\P\left\{ z_t(x) \ge \tfrac12\|z_t(x)\|_{L^k(\P)}\right\}\\
		&\ge \tfrac14\cdot\frac{\left[ \E\left(|z_t(x)|^k\right)\right]^2}{
			\E\left(|z_t(x)|^{2k}\right)}\\
		&\ge \tfrac14  A^{-4k} \exp\left( -\left[8A-\frac{2}{A}\right]k^3t\right),
	\end{align*}
	uniformly for all real number $x\in\R$, $k\ge 2$, and $t>0$. 
	Since $A>0$, it follows that $8A-(2/A)< 8A$, and hence
	\[
		\P\left\{ z_t(x) \ge \tfrac12 A^{-1}u_0(3a/2) \e^{k^2t/A}\right\}
		\ge \tfrac14 \exp\left\{ - 8Ak^3t -4k\log A\right\},
	\]
	uniformly for all real number $x\in\R$, $k\ge 2$, and $t>0$. 
	Choose and fix an arbitrary number $\varepsilon>0$. We apply the preceding with 
	\[
		k := \sqrt{\frac A t
		\left|\log\left(\frac{4A\varepsilon}{u_0(3a/2)}\right)\right|};
	\]
	 equivalently,
	 $\tfrac12 A^{-1}u_0(3a/2) \e^{k^2t/A} = 2\varepsilon.$
	Since $u_0(3a/2)\to0$ as $a\to\infty$, it follows readily that $k\ge 2$
	if $a$ is sufficiently large [how large depends only on $A$]. Hence,
	\[
		\inf_{x\in\R}\log \P\left\{ z_t(x) \ge 2\varepsilon\right\}\\
		\ge  \exp\left( - \frac{L+o(1)}{\sqrt t}\left|\log u_0(3a/2)\right|^{3/2}\right),
	\]
	for all $a$ large, where $o(1)\to0$ as $a\to\infty$
	and $L:= 8A^{5/2}+A^{1/2}$. Note that $L$ is a universal
	constant [since $A$ is].
	
	The preceding estimate, \eqref{w<u}, and \eqref{w-z} together imply that, as $a\to\infty$,
	\begin{equation}\label{prec:LB}\begin{split}
		\P\left\{ u_t(x)\ge\varepsilon\right\} &
			\ge\P\left\{ w_t(x) \ge\varepsilon\right\}\\
		&\ge \P\left\{ z_t(x) \ge2\varepsilon\right\}
			- \P\left\{ |w_t(x)-z_t(x)|\ge\varepsilon\right\}\\
		&\ge \exp\left( - \frac{L+o(1)}{\sqrt t}\left|\log u_0(3a/2)\right|^{3/2}\right)
			- A_1 \e^{-a^2/(64t)},
	\end{split}\end{equation}
	uniformly for all $x\in [7a/8\,,9a/8]$, where $A_1<\infty$ does not
	depend on $a$. The condition $\bm{\Lambda}<\infty$ implies that
	$a^{-2}|\log u_0(3a/2)|^{3/2}\to 0$ as $a\to\infty$. Therefore,
	\eqref{prec:LB} implies \eqref{plan:LB} and hence the theorem.
\end{proof}

\begin{proof}[Proof of Theorem \ref{th:tails}: Part 2.]
	In analogy with the proof of part 1,
	it suffices to establish the following: If $\bm{\Lambda}>0$, then
	\[
		\limsup_{x\to\infty} \frac{\log\P\left\{ u_t(x)>\varepsilon\right\}}{\log x}
		\le - \frac{K\bm{\Lambda}^{3/2}}{\sqrt t}.
	\]
	[This is vacuously true when $\bm{\Lambda}=0$.]
	From here on we assume that $\bm\Lambda>0$.
	
	Choose and fix an arbitrary  number $a>0$, and define
	$w=\{w_t(x)\}_{t>0,x\in\R}$
	to be the solution to \eqref{SHE} subject to the following
	initial value:
	\[
		w_0(x) := u_0(|x|\wedge (a/2))
		\qquad\text{for all $x\in\R$}.
	\]
	The process $w$ is the present analogue of its counterpart---also dubbed
	as $w$---in Part 1 of the proof.
	As was the case in Part 1, one can construct $w$ in a standard way
	because $w_0$ is a nonrandom elements of $L^\infty(\R)$
	[$0\le u_0 \le w_0$]. Furthermore,
	\begin{equation}\label{w>u}
		w_t(x) \ge u_t(x)\qquad
		\text{for all $t>0$ and $x\in\R$},
	\end{equation}
	thanks to Mueller's comparison principle \cite{Mueller1,Mueller2}. Compare with \eqref{w<u}.
	Therefore, it remains to find an
	upper  bound for the tails of the distribution of $w_t(x)$. 

	Define
	\[
		z_0(x) := u_0(a/2)\qquad\text{for all $x\in\R$},
	\]
	and let $z:=\{z_t(x)\}_{t>0,x\in\R}$ denote the solution to \eqref{SHE}
	with initial value $z_0$. By the comparison principle,
	$w_t(x) \ge z_t(x)$ for all $t\ge0$ and $x\in\R$. And now use our susceptibilty
	estimate [Theorem  \ref{th:suscept}] in  analogy with the proof of
	part 1 of the theorem in order to see that
	\begin{equation}\label{w-z:1}
		\E\left( \left| w_t(x) - z_t(x) \right|^2 \right)
		\le C\ell\|u_0\|_{L^\infty(\R)}^2
		\exp\left( -\frac{a^2}{64t} + \frac{\lip_\sigma^4 t}{4}\right),
	\end{equation}
	uniformly for all $x\in [7a/8\,,9a/8]$.
	[Compare with \eqref{w-z}.] Now we relabel \eqref{z:mom} $[a/2\leftrightarrow
	3a/2]$ to see that, for the same constant $A$ that appeared in \eqref{z:mom},
	\[
		\E\left(|z_t(x)|^k\right) \le [Au_0(a/2)]^k \e^{Ak^3t},
	\]
	simultaneously for all $x\in\R$, $t>0$, and $k\in[2\,,\infty)$. 
	Chebyshev's inequality yields
	\begin{align*}
		\P\{z_t(x)\ge \varepsilon/2\} &\le 
			\inf_{k\ge 2}\left[\frac{2Au_0(a/2)\e^{Ak^2t}}{\varepsilon}\right]^k\\
		&= \exp\left( -\frac{2}{3\sqrt{3At}}\left[
			\log\left(\frac{\varepsilon}{2Au_0(a/2)}\right)\right]^{3/2}\right),
	\end{align*}
	uniformly for every real number $x$.
	This, \eqref{w>u}, and \eqref{w-z:1} together imply that
	\begin{align*}
		\P\left\{ u_t(x)\ge\varepsilon\right\}&\le \P\left\{ w_t(x)\ge\varepsilon\right\}
			\le \P\{z_t(x)\ge \varepsilon/2\}
			+ \P\left\{ |w_t(x)-z_t(x)| \ge\varepsilon/2\right\}\\
		&\le \exp\left( -\frac{2}{3\sqrt{3At}}\left[
			\log\left(\frac{\varepsilon}{2Au_0(a/2)}\right)\right]^{3/2}\right)
			+ A_1\e^{-a^2/(64t)},
	\end{align*}
	uniformly for all $x\in [7a/8\,,9a/8]$, where $A_1$ is a finite constant
	that does not depend on $a$. Part 2 can be deduced easily from this estimate.
\end{proof}

\section{Proof of Theorem \ref{th:0-1}}
We will soon see  that, in order to prove Theorem \ref{th:0-1}
it suffices to consider separately the cases that $\bm{\Lambda}>0$
and $\bm{\Lambda}<\infty$. [There is, of course, some overlap between
the two cases.]
The two portions require different ideas; let us begin with the case
$\bm{\Lambda}>0$, since the proof is uncomplicated and can be carried
out swiftly.

\subsection{Part 1 of the Proof}
Throughout this part of the proof, we assume that
\[
	\bm{\Lambda}>0,
\]
keeping in mind that $\bm{\Lambda}=\infty$ is permissible, as a particular case.

Choose and fix a [finite] number
\[
	\lambda\in(0\,,\bm{\Lambda}),
\]
and introduce two new parameters $\tau$ and $T$ as follows:
\[
	0<\tau<T := \frac{K^2\lambda^3}{64},
\]
where $K$ is the universal constant that appeared in the
statement of Theorem \ref{th:tails}.
We plan to prove that
\begin{equation}\label{goal:sup:0}
	\lim_{x\to\infty}\sup_{t\in(\tau,T)} u_t(x)=0\qquad\text{a.s.}
\end{equation}
Suppose, for the moment, that we have established \eqref{goal:sup:0}.
Thanks to symmetry,
\eqref{goal:sup:0} also implies that 
$\lim_{x\to-\infty}\sup_{t\in(\tau,T)}u_t(x)=0$ a.s. Because $u$ is almost
surely continuous \cite{Dalang,cbms,Walsh} it follows that
\begin{equation}\label{sup:1}
	\P\left\{ \sup_{x\in\R}u_t(x)<\infty\text{ for all $t\in(\tau\,,T)$}\right\}=1.
\end{equation}
If $\bm{\Lambda}=\infty$, then we can choose $\tau$ as close as we like to $0$
and $T$ as close as we like to $\infty$ in order to deduce Part 1 of
Theorem \ref{th:0-1} from \eqref{sup:1}. Similarly,
if $\bm{\Lambda}<\infty$, then we
can deduce half of Part 3 of Theorem \ref{th:0-1}; specifically, we
can choose $T$ arbitrarily close to $K^2\bm{\Lambda}^3/64$ to see
that $t_1:= K^2\bm{\Lambda}^3/64$ can serve as a candidate for the constant
$t_1$ of Theorem \ref{th:0-1}, Part 3.
We conclude this subsection by verifying \eqref{goal:sup:0}.

Define
\[
	x_n := \sqrt n\qquad\text{for all integers $n\ge 0$},
\]
and
\[
	t(j\,,n) := \frac{jT}{n}\qquad\text{for all
	$j\in \mathds{J}(n\,;\tau\,,T):=\left[
	\frac{n \tau}{T}\,,n\right]\cap\Z$}.
\]
Theorem \ref{th:tails} ensures that for all $\varepsilon>0$ and all
sufficiently-large integers $n\gg1$,
\begin{align*}
	\P\left\{ \max_{j\in \mathds{J}(n;\tau,T)} u_{t(j,n)}(x_n)
		\ge \varepsilon \right\}&\le \sum_{j\in \mathds{J}(n;\tau,T)}
		\P\left\{ u_{t(j,n)}(x_n) \ge \varepsilon\right\}\\
	&\le \text{const}\cdot \sum_{j\in \mathds{J}(n;\tau,T)} 
		\exp\left( -\frac{K\lambda^{3/2}}{\sqrt{t(j\,,n)}} \log|x_n|\right)\\
	&\le \text{const}\cdot 
		\exp\left( -\left[\frac{K\lambda^{3/2}}{2\sqrt{T}}-1\right] \log n\right)
		=O(n^{-3}).
\end{align*}
Therefore, the Borel--Cantelli lemma ensures that
\begin{equation}\label{u:0:1}
	\lim_{n\to\infty} \max_{j\in \mathds{J}(n;\tau,T)} u_{t(j,n)}(x_n)=0
	\qquad\text{a.s.}
\end{equation}

Choose and fix an arbitrary number
$\varrho\in(0\,,\nicefrac14)$, and define $k:=\max(2\,,3/\varrho).$
A standard continuity estimate (see, for example, Walsh
\cite[p.\ 319]{Walsh} and Chen and Dalang
\cite{ChenDalang14Holder}) shows that
\[
	A_{k,\varrho,\tau,T}:=A:=\sup_{x\in\R}
	\E\left(\sup_{\substack{s,t\in(\tau,T):\\
	s\neq t}} \frac{\left| u_s(x) - u_t(x)\right|^k}{|s-t|^{k\varrho}}\right)
	<\infty.
\]
Therefore, for all $\varepsilon>0$ and integers $n\ge 1$,
\begin{align*}
	&\P\left\{ \sup_{t\in(\tau,T)}\min_{j\in \mathds{J}(n;\tau,T)} 
		\left| u_{t(j,n)}(x_n) - u_t(x_n)\right|
		\ge \varepsilon \right\}\\
	&\hskip1in\le \P\left\{ \sup_{\substack{s,t\in(\tau,T):\\
		|s-t| \le T/n}} \left| u_s(x_n) - u_t(x_n)\right|
		\ge \varepsilon \right\}
		\le \frac{AT^k}{\varepsilon^k n^{k\varrho}}  = O(n^{-3}),
\end{align*}
as $n\to\infty$. We may therefore appeal to the Borel--Cantelli lemma
and \eqref{u:0:1} in order to deduce that, with probability one,
\begin{equation}\label{u:0:2}
	\lim_{n\to\infty}\sup_{t\in(\tau,T)} u_t(x_n)=0.
\end{equation}
Let us recall also the following standard continuity estimate 
(see, for example, Walsh \cite[p.\ 319]{Walsh} and
Chen and Dalang \cite{ChenDalang14Holder}):
\[
	B:=B_{k,\varrho,\tau,T} := \sup_{x\in\R}
	\E\left(\sup_{t\in(\tau,T)}\sup_{x< y\le x+1}
	\frac{|u_t(y)-u_t(x)|^k}{|y-x|^{2k\varrho}}\right)<\infty.
\]
Since $x_{n+1} - x_n \le (2n)^{-1/2}$ as $n\to\infty$, it follows that
\[
	\P\left\{ \sup_{t\in(\tau,T)}\sup_{x_n<y\le x_{n+1}}
	|u_t(y)-u_t(x_n)|\ge\varepsilon\right\} \le \frac{B}{\varepsilon^k 
	(2n)^{k\varrho}}
	=O(n^{-3}).
\]
Thanks to the Borel--Cantelli lemma,
the preceding and \eqref{u:0:2} together imply \eqref{goal:sup:0} and conclude this
subsection.

\subsection{Part 2 of the Proof}
We now consider the case that $\bm{\Lambda}<\infty$.
Throughout, we choose and fix three arbitrary numbers: 
\[
	\varepsilon>0;\quad
	\tau> 4L^2\bm{\Lambda}^3;\quad
	\text{and}\quad
	T>\tau;
\]
where $L$ is the constant of Theorem \ref{th:tails}.
Our plan is to prove that
\begin{equation}\label{goal:sup:1}
	\inf_{t\in(\tau,T)}\sup_{x>0} u_t(x)=\infty\quad\text{a.s.}
\end{equation}
If $\bm{\Lambda}>0$, then \eqref{goal:sup:1} implies
that, outside a single $\P$-null set,
$\sup_{x\in\R}u_t(x)=\infty$ for all $t\ge t_2 := 4L^2\bm{\Lambda}^3$.
And if $\bm{\Lambda}=0$, then we choose $\tau$ as close as we would like
to zero in order to see that, outside one $\P$-null set,
$\sup_{x\in\R}u_t(x)=\infty$ for all $t>0$. In other words,
\eqref{goal:sup:1} furnishes proof of the remaining half of
Theorem \ref{th:0-1}.

Before we prove \eqref{goal:sup:1}, we need to recall a few facts 
about parabolic stochastic PDEs. Let 
\[
	u^{(n,0)}_t(x) := (p_t*u_0)(x)\qquad\text{for all $t>0$, $x\in\R$, and
	$n\ge 0$}.
\]
Then iteratively define for each fixed $n\ge 0$, 
\[
	u^{(n,j+1)}_t(x) = (p_t*u_0)(x) + \int_{(0,t)\times
	[x-\sqrt{n t},x+\sqrt{n t}]} p_{t-s}(y-x)
	\sigma\left(u^{(n,j)}_s(y)\right)\xi(\d s\,\d y),
\]
for all $j\ge 0$, $t>0$, and $x\in\R$. We recall the following
result.

\begin{lemma}[Lemma 4.3 of Conus et al \cite{CJK}]\label{lem:CJK:1}
	There exists a finite constant $A$ such that
	for all integers $n\ge 1$
	and real numbers $t>0$,
	\[
		\sup_{x\in\R}
		\E\left(\left|u_t(x) - u^{(n,n)}_t(x)\right|^2\right)
		\le \frac{A\e^{A t- n}}{n^2}.
	\]
\end{lemma}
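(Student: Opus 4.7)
My plan is to introduce an auxiliary \emph{untruncated} Picard iteration and split the error into a classical Picard-convergence piece and a purely tail piece. Define $\tilde u^{(0)}_t(x) := (p_t * u_0)(x)$ and, for $j \ge 0$,
\[
    \tilde u^{(j+1)}_t(x) := (p_t * u_0)(x) + \int_{(0,t) \times \R} p_{t-s}(y-x)\,\sigma(\tilde u^{(j)}_s(y))\,\xi(\d s\,\d y);
\]
this is the standard Picard sequence for \eqref{SHE} with the stochastic integral taken over all of $\R$, and it is classical that $\tilde u^{(j)}_t(x) \to u_t(x)$ in $L^2(\P)$. The triangle inequality then splits the target as
\[
    u_t(x) - u^{(n,n)}_t(x) = \left[u_t(x) - \tilde u^{(n)}_t(x)\right] + \left[\tilde u^{(n)}_t(x) - u^{(n,n)}_t(x)\right],
\]
and I bound the two summands separately.

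For the first summand I invoke the standard Picard estimate: It\^o's isometry together with the Lipschitz property of $\sigma$ and the identity $\int_\R [p_{t-s}(z)]^2\,\d z = [2\sqrt{\pi(t-s)}]^{-1}$ yield a recursion for $\sup_x \E|\tilde u^{(j+1)}_t(x) - \tilde u^{(j)}_t(x)|^2$ whose Abel-type iteration produces a super-polynomially small remainder, of order $C^n t^{n/2}/\Gamma(n/2+1)$, which is comfortably dominated by $Ae^{At-n}/n^2$.

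For the second (truncation) summand I set $d_j(t) := \sup_x \E|\tilde u^{(j)}_t(x) - u^{(n,j)}_t(x)|^2$, noting that $d_0 \equiv 0$ since both iterates share the common initial term $(p_t * u_0)(x)$. Splitting the difference of stochastic integrals into a \emph{body} piece on $\{|y-x| \le \sqrt{nt}\}$ (capturing the actual difference $\sigma(\tilde u^{(j)}_s(y)) - \sigma(u^{(n,j)}_s(y))$) and a \emph{tail} piece on $\{|y-x| > \sqrt{nt}\}$ (containing only $\sigma(\tilde u^{(j)}_s(y))$), It\^o isometry and the Lipschitz bound give
\[
    d_{j+1}(t) \le \frac{\lip_\sigma^2}{\sqrt\pi}\int_0^t \frac{d_j(s)}{\sqrt{t-s}}\,\d s + \mathcal{T}_n(t),
\]
where the tail remainder $\mathcal{T}_n(t)$ is estimated by combining the uniform moment bound $\sup_{s \le t,\,y \in \R} \E|\tilde u^{(j)}_s(y)|^2 \le C e^{Cs}$ (itself a Gronwall consequence of the Picard recursion, uniform in $j$) with the Gaussian tail estimate $\int_{|z| > \sqrt{nt}} [p_{t-s}(z)]^2\,\d z \le [2\pi\sqrt{nt}]^{-1}\exp(-nt/(t-s))$; the substitution $u = t/(t-s)$ followed by integration by parts then yields $\mathcal{T}_n(t) \le C e^{Ct-n}/n^2$.

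Finally, I iterate the recursion for $d_j$ exactly $n$ times starting from $d_0 \equiv 0$. Because the Abel kernel $(t-s)^{-1/2}$ lies in $L^1([0,t])$, a Gronwall-style unfolding absorbs the convolution into a multiplicative $e^{At}$ prefactor while preserving the additive tail gain, yielding $d_n(t) \le A e^{At-n}/n^2$; adding this to the Picard remainder from the previous paragraph completes the bound. The step I expect to be the main obstacle is this last iteration: one must ensure that $n$ applications of the singular kernel $(t-s)^{-1/2}$ do not spoil the $e^{-n}$ factor extracted from the Gaussian tail. This is resolved by the observation that the iterated Abel kernel contributes only a factor depending on $t$ (not on $n$), so $d_n$ retains the shape $e^{-n}/n^2$ up to an $e^{At}$ prefactor.
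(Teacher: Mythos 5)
The paper does not actually prove this lemma: it is quoted verbatim as Lemma 4.3 of Conus, Joseph, and Khoshnevisan \cite{CJK}, with the remark immediately below pointing to \cite[Lemma~10.10]{cbms} for the precise formulation and proof. So there is no in-paper argument to match your proposal against. That said, your decomposition into a classical Picard-convergence piece $u_t - \tilde u^{(n)}_t$ and a truncation piece $\tilde u^{(n)}_t - u^{(n,n)}_t$, with the latter handled by It\^o isometry, a uniform-in-$j$ second moment bound, and the Gaussian tail of $\int_{|z|>\sqrt{nt}}[p_{t-s}(z)]^2\,\d z$, is exactly the structure of the CJK argument, and all the moving parts are sound.

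One place where the arithmetic does not deliver what you claim is the tail remainder $\mathcal{T}_n(t)$. With the sharp Mills bound $\int_a^\infty \e^{-w^2}\d w\le (2a)^{-1}\e^{-a^2}$, one gets $\int_{|z|>\sqrt{nt}}[p_{t-s}(z)]^2\,\d z\le (2\pi\sqrt{nt})^{-1}\e^{-nt/(t-s)}$, as you say, and then the substitution $u=t/(t-s)$ yields
\[
\mathcal{T}_n(t)\lesssim \frac{\e^{Ct}}{\sqrt{nt}}\cdot t\int_1^\infty \frac{\e^{-nu}}{u^2}\,\d u .
\]
Integration by parts gives $\int_1^\infty \e^{-nu}u^{-2}\,\d u\le \e^{-n}/n$ (the leading asymptotic, not $\e^{-n}/n^2$), so the net polynomial factor is $n^{-3/2}$, not $n^{-2}$. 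The Gronwall/Abel unfolding then propagates this, giving $d_n(t)\le A\e^{At-n}/n^{3/2}$. This is slightly weaker than the stated bound; you should either find the missing $\sqrt n$ (I don't see it coming from a sharper Gaussian tail estimate, since $\int_1^\infty \e^{-nu}u^{-2}\,\d u\sim \e^{-n}/n$ is tight) or simply note that $n^{-3/2}$ suffices for every use the paper makes of this lemma, where only the $\e^{-n}$ decay matters.

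A second, smaller point: the iteration of the Abel-kernel convolution is fine, but you should fix constants up front, e.g.\ multiply $d_j$ by $\e^{-Ct}$ with $C$ chosen so that $\lip_\sigma^2\pi^{-1/2}\int_0^\infty \e^{-Cr}r^{-1/2}\,\d r<1$; otherwise it is not immediately transparent that $n$ applications of the singular kernel merely cost a $t$-dependent exponential prefactor. With that bookkeeping and the corrected polynomial exponent, your outline is a faithful reconstruction of the cited argument.
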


Actually, Conus et al \cite{CJK}  present a slightly different formulation than
the one that appears above; see Ref.\ \cite[Lemma 10.10]{cbms}
for this particular formulation, as well as proof.

\begin{lemma}[Lemma 4.4 of Conus et al \cite{CJK}]\label{lem:CJK:2}
	Choose and hold fixed an integer $n\ge 1$
	and real numbers $t>0$ and $x_1,\ldots,x_k\in\R$  that satisfy
	$|x_i-x_j| \ge 2n^{3/2}\sqrt{t}$ 
	for all $1\le i\neq j\le k$. Then,
	$\{u^{(n,n)}_t(x_j)\}_{j=1}^k$ are independent.
\end{lemma}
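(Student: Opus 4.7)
The plan is to track, by induction on $j$, the spatial support of the noise that $u^{(n,j)}_t(x)$ depends on, and then to exploit the independence property of white noise on disjoint sets. Concretely, I will show that for every $j\ge 0$, $t>0$, and $x\in\R$, the random variable $u^{(n,j)}_t(x)$ is measurable with respect to the $\sigma$-algebra
\[
	\mathcal{G}^{(n,j)}_t(x) := \sigma\left\{ \xi(E)\,:\, E\subseteq (0\,,t)\times
	\bigl[x-j\sqrt{nt}\,,x+j\sqrt{nt}\bigr]\ \text{Borel}\right\}.
\]

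\emph{Base case.} At $j=0$, $u^{(n,0)}_t(x)=(p_t*u_0)(x)$ is deterministic, hence measurable with respect to the trivial $\sigma$-algebra, which is contained in $\mathcal{G}^{(n,0)}_t(x)$.

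\emph{Inductive step.} Assume the claim for some $j\ge 0$. In the defining formula for $u^{(n,j+1)}_t(x)$, the outer stochastic integral sees noise only on $(0\,,t)\times[x-\sqrt{nt}\,,x+\sqrt{nt}]$, while the integrand involves $u^{(n,j)}_s(y)$ at points $(s\,,y)\in(0\,,t)\times[x-\sqrt{nt}\,,x+\sqrt{nt}]$. By the inductive hypothesis and the fact that $s\le t$ implies $j\sqrt{ns}\le j\sqrt{nt}$, each such $u^{(n,j)}_s(y)$ is measurable with respect to noise on $(0\,,t)\times[y-j\sqrt{nt}\,,y+j\sqrt{nt}]$. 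Taking the union over $y\in[x-\sqrt{nt}\,,x+\sqrt{nt}]$ and combining with the outer integration set yields a region contained in $(0\,,t)\times[x-(j+1)\sqrt{nt}\,,x+(j+1)\sqrt{nt}]$, completing the induction.

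\emph{Conclusion.} Setting $j=n$ gives $u^{(n,n)}_t(x)$ measurable with respect to $\mathcal{G}^{(n,n)}_t(x)$, i.e.\ the spatial window $[x-n^{3/2}\sqrt{t}\,,x+n^{3/2}\sqrt{t}]$. The separation hypothesis $|x_i-x_j|\ge 2n^{3/2}\sqrt{t}$ makes the intervals $[x_i-n^{3/2}\sqrt{t}\,,x_i+n^{3/2}\sqrt{t}]$ pairwise disjoint, so the corresponding $\sigma$-algebras $\mathcal{G}^{(n,n)}_t(x_j)$ are generated by $\xi$ restricted to pairwise disjoint Borel subsets of $\R_+\times\R$. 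Since white noise assigns independent Gaussian masses to disjoint Borel sets, these $\sigma$-algebras are mutually independent, and hence so are $\{u^{(n,n)}_t(x_j)\}_{j=1}^k$.

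The only nontrivial point is bookkeeping the growth of the spatial cone through the Picard iteration; the key observation that makes this work is that truncating the spatial integration at $\sqrt{nt}$ (rather than integrating over all of $\R$) keeps the incremental radius bounded by $\sqrt{nt}$ at each iteration, so $n$ iterations accumulate a radius of exactly $n^{3/2}\sqrt{t}$, matching the separation hypothesis.
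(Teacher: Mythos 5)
The paper itself does not prove this lemma; it quotes it verbatim from Conus, Joseph and Khoshnevisan \cite{CJK}, Lemma 4.4, so there is no in-paper argument to compare against. Your sketch is correct and is precisely the standard (and, really, the only natural) argument for results of this type, and it matches what is done in \cite{CJK}: induct on $j$ to show that $u^{(n,j)}_t(x)$ is measurable with respect to the white noise restricted to the space--time cone $(0\,,t)\times[x-j\sqrt{nt}\,,x+j\sqrt{nt}]$, then invoke the independence of white noise over disjoint Borel sets. The bookkeeping detail you flag at the end --- that the spatial truncation at radius $\sqrt{nt}$ is exactly what makes each iteration widen the cone by only $\sqrt{nt}$, so that $n$ iterations accumulate radius $n^{3/2}\sqrt{t}$ --- is indeed the entire point of introducing the truncated iterates $u^{(n,j)}$. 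Two minor remarks, neither of which is a gap: the inductive hypothesis should be read as holding uniformly over all $(s\,,y)$ with $s\le t$ (which you implicitly do, since you use the monotonicity of $s\mapsto\sqrt{ns}$); and if $|x_i-x_j|=2n^{3/2}\sqrt{t}$ exactly, the two closed intervals share a single endpoint, but since that overlap has Lebesgue measure zero the noises restricted to the two cones are still independent.
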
 

\begin{proof}[Proof of Theorem \ref{th:0-1}: Part 2.]
	Choose and fix some $\varepsilon>0$,
	and consider the events 
	\[
		E_t(x) := \{\omega\in\Omega:\, u_t(x)(\omega) < \varepsilon\}
		\qquad\text{for every $t,x>0$}.
	\]
	According to Theorem \ref{th:tails}, for every 
	$\lambda\in (\bm{\Lambda}, \tau^{1/3}(4L^2)^{-1/3}]$ 
	we can find a real number $n(\lambda,\varepsilon)> 1$ such that
	\begin{equation}\label{P(E)}
		\P(E_t(x)) \le 1 - x^{-L\lambda^{3/2}/\sqrt{t}}
		\le 1 - x^{-1/2},
	\end{equation}
	uniformly for all $x\ge n(\lambda,\varepsilon)$ and
	$t\in(\tau\,,T)$.
	Consider the events
	\[
		E_t^{(n)}(x) := \left\{\omega:\, u^{(n,n)}_t(x)(\omega) < 2\varepsilon\right\}
		\qquad\text{for $x\in\R$ and $n\ge 1$}.
	\]
	Lemma \ref{lem:CJK:1} ensures the existence of a finite constant
	$c=c(\tau,T,\varepsilon)$ such that
	\[
		\sup_{t\in(\tau,T)}
		\P\left( E_t(x) \setminus E_t^{(n)}(x)\right) \le
		\sup_{t\in(\tau,T)}
		\P\left\{ \left| u_t(x) - u^{(n,n)}_t(x)\right| \ge\varepsilon  \right\}
		\le cn^{-2} \e^{-n},
	\]
	for all integers $n\ge 1$. Therefore,
	\begin{equation}\label{PPP}
		\P\left( \bigcap_{x\in[n^4\,,2n^4]} E_t(x)\right) \le 
		\P\left( \bigcap_{\ell=n^4}^{2n^4} E_t(\ell)\right)\le
		\P\left( \bigcap_{\ell=n^4}^{2n^4} E_t^{(n)}(\ell)\right) + 
		c n^{2}\e^{-n},
	\end{equation}
	uniformly for all integers $n\ge 1$ and real numbers $t\in(\tau\,,T)$.
	Let $x_1:=n^4$ and define iteratively
	\[
		x_{j+1} := x_j + 2 n^{3/2}\sqrt t\qquad\text{for all $j\ge 1$}.
	\] 
	Let
	\[
		\gamma_n :=  \max\left\{ j\ge 1:\, x_j \le 2n^4\right\},
	\]
	and observe that
	\begin{equation}\label{gamma:n}
		\gamma_n \ge \left\lfloor 1 + \frac{n^{5/2}}{2 t^{1/2}}
		\right\rfloor
		\ge \frac{n^{5/2}}{2T^{1/2}}, 
	\end{equation}
	uniformly for all $t\in(\tau\,,T)$ and $n$ sufficiently large.
	Moreover,
	\begin{align*}
		\P\left( \bigcap_{\ell=n^4}^{2n^4} E_t^{(n)}(\ell)\right)  &\le
			\P\left( \bigcap_{j=1}^{\gamma_n} E_t^{(n)}(x_j)\right) =
			\prod_{j=1}^{\gamma_n} \P\left( E_t^{(n)}(x_j)\right)
			&\text{[Lemma \ref{lem:CJK:2}]}\\
		&\le \prod_{j=1}^{\gamma_n}\left[\P(E_t(x_j)) + 
			cn^{2}\e^{-n}\right]
			&\text{[Lemma \ref{lem:CJK:1}]}\\
		&\le \left[ 1 - \frac{1}{\sqrt{2n^4}} + cn^{2}\e^{-n}\right]^{\gamma_n},
	\end{align*}
	uniformly for all $t\in(\tau\,,T)$ and $n$ sufficiently large,
	owing to \eqref{P(E)}. Since $1-y\le \exp(-y)$ for all $y\in\R$,
	the preceding yields
	\[
		\sup_{\tau<t<T}
		\P\left( \bigcap_{\ell=n^4}^{2n^4} E_t^{(n)}(\ell)\right)
		\le \exp\left( - \frac{n^{1/2}}{4 T^{1/2}}\right),
	\]
	for all  sufficiently-large integers $n\gg1$.
	Thanks to \eqref{gamma:n}, the preceding and \eqref{PPP} together yield 
	\begin{equation}\label{P(En)}
		\sup_{t\in(\tau,T)}\P\left\{ 
		\sup_{n^4\le x\le 2n^4} u_t(x) < \varepsilon\right\} \le
		2\exp\left( - \frac{n^{1/2}}{4 T^{1/2}}\right),
	\end{equation}
	for all integers $n$ sufficiently large. Define $t(0):=\tau$,
	and $t(j) := \tau + j(T-\tau)/n$ for all $1\le j\le n$ in order to deduce
	from \eqref{P(En)} that, for every sufficiently-large integer $n$,
	\begin{align}
		&\P\left\{ \inf_{t\in(\tau,T)}\sup_{x\in [n^4,\,2n^4]}
			u_t(x) < \varepsilon \right\}
			\label{Cheb}\\\notag
		&\le \P\left\{ \inf_{0\le j\le n}\sup_{x\in[n^4,\,2n^4]}
			u_{t(j)}(x) < 2\varepsilon \right\} + 
			\P\left\{ \sup_{\substack{s,t\in(\tau,T)\\\notag
			0< t-s < 1/n}}\sup_{x\in
			[n^4,\,2n^4]} \left| u_t(x) - u_s(x) \right|>\varepsilon\right\}\\
			\notag
		&\le 2n\exp\left( - \frac{n^{1/2}}{4 T^{1/2}}
			\right) + \sum_{\nu=1}^{1+\lfloor n^4\rfloor}
			\P\left\{ \sup_{\substack{s,t\in(\tau,T)\\\notag
			0< t-s < 1/n}}\sup_{x\in[\nu,\nu+1]} \left| u_t(x) - u_s(x) \right|>\varepsilon\right\}.
	\end{align}
	A standard modulus of continuity estimate (see, for example, Walsh
	\cite[p.\ 319]{Walsh} and Chen and Dalang \cite{ChenDalang14Holder}) shows that,
	for each fixed $k\ge 1$ and $\varrho\in(0\,,\nicefrac14)$,
	\[
		\sup_{\nu\in\R}
		\E\left(\sup_{\substack{s,t\in(\tau,T)\\
		0< t-s < 1/n}}\sup_{x\in[\nu,\nu+1]} \left| u_t(x) - u_s(x) \right|^k\right)
		\le\text{const}\cdot n^{-k\varrho},
	\]
	for all $n\ge 1$. Let us apply this with $\varrho:=\nicefrac18$ and 
	$k:=64$.
	In this way, we may deduce from \eqref{Cheb} and Chebyshev's inequality that
	\[
		\P\left\{ \inf_{t\in(\tau,T)}\sup_{x>0} u_t(x) <\varepsilon \right\}\le
		\lim_{n\to\infty}\P\left\{ \inf_{t\in(\tau,T)}\sup_{x\in[n^4,\,2n^4]}
		u_t(x) < \varepsilon \right\}=0.
	\]
	Because $\varepsilon>0$ is arbitrary, this proves \eqref{goal:sup:1}.
\end{proof}

\subsection{Part 3 of the Proof}
We now finish the proof of Part 3. Throughout,
$(\Omega\,,\F,\P)$ denotes the underlying probability space,
and we consider only the case that
$0<\bm{\Lambda}<\infty$. 

For every integer $N\ge 1$ consider the stopping time, 
\[
	\mathcal{T}_N:=\inf\left\{ t>0:\ M(t)\geq N\right\}.
\]
Since $\mathcal{T}_N \le \mathcal{T}_{N+1}$ for all $N\ge1$, 
the random variable
\[
	\mathcal{T} :=\lim_{N\rightarrow\infty} \mathcal{T}_N
\]
exists. According to Parts 1 and 2 of the proof of 
Theorem \ref{th:0-1},  
\[
	0<t_1<\mathcal{T}<t_2<\infty\qquad\text{a.s.},
\]
where $t_1$ and $t_2$ are non random and depend only on 
$\bm{\Lambda}$.
In addition, if $t<\mathcal{T}(\omega)$ for some $\omega\in\Omega$, 
then there exists an integer $N(\omega)>0$ 
such that $t\leq\mathcal{T}_{N(\omega)}(\omega)<\mathcal{T}(\omega)$.
This  implies that $M(t)(\omega)\leq N(\omega)<\infty$. 

On the other hand, if $t>\mathcal{T}(\omega_1)$ for some $\omega_1\in\Omega$, 
then there exists some $N_1(\omega_1)>0$ such that $t\geq\mathcal{T}_n(\omega_1)$
for all $n\geq N_1(\omega_1)$.
It follows that  $M(t)(\omega_1)\geq n$ for all $n\geq N_1(\omega_1)$,
whence $M(t)(\omega_1)=\infty$.   This completes the proof of Part 3,
and concludes the proof of Theorem \ref{th:0-1}.\\

\noindent\textbf{Acknowledgements.}
Part of this work were initiated while two of us [L.C. \&\ D.K.] were visiting
{\it Centre de Recerca Matem\`atica} in Barcelona [CRM].
We thank CRM, in particular,
Professors Marta Sanz--Sol\'e, Frederic Utzet, and Josep Vives, for their hospitality
and for providing us with a wonderful research environment.

\begin{small}
\medskip

\noindent\textbf{Le Chen} [\texttt{chenle@ku.edu}].
	Department of Mathematics, University of Kansas,
	Lawrence, KS  66045-7594\\[.2cm]
\noindent\textbf{Davar Khoshnevisan} [\texttt{davar@math.utah.edu}].
	Department of Mathematics, University of Utah,
	Salt Lake City, UT 84112-0090\\[.2cm]
\noindent\textbf{Kunwoo Kim} [\texttt{kunwookim@msri.org}].
	The Mathematical Sciences Research Institute, 17 Gauss Way, Berkeley, CA 94720-5070 
\end{small}

\end{document}